\newtheorem{theorem}{Theorem}
\newtheorem{proof}{Proof}
\newcommand{\const}{\hbox{\rm const}}
\def\R{\mathbb{R}}
\def\bC {{\hbox{\bf C}}}
\def\bR {{\hbox{\bf R}}}
\def\f {{\hbox{\bf f}}}
\def\bx {{\hbox{\bf x}}}
\def\bG {{\hbox{\bf G}}}
\def\bY {{\hbox{\bf Y}}}
\def\bX {{\hbox{\bf X}}}
\def\bZ {{\hbox{\bf Z}}}
\title
{\bf Justification of an asymptotic expansion at infinity}
\author
{\bf L.A. Kalyakin
\\
Institute of Mathematics,  Ufa Sci. Centre,  Russian Acad. of Sci.\\
112,Chernyshevsky str., Ufa,
\\450000, Russia\\
E-mail: klenru@mail.ru
\thanks
{The research was supported by RFBR grants nos. 06-01-00124, 06-01-92052,
INTAS grant no. 03-51-4286 and grant of Scientific School no. 2215.2008.1.}
\date{June. 02. 2008}}
\begin{document}

\maketitle

\begin{abstract}
\noindent A family of asymptotic solutions at infinity for the system of
ordinary differential equations is considered. Existence of exact solutions
which have these asymptotics is proved.
\end{abstract}

%  The paper
\section{Introduction}

This paper is a reply on the Calogero's question, asked on the NEEDS-2007,
about justification of an asymptotic solution for the ordinary differential
equations. The full-dimensional manifold of solutions is the principal
interest. Such solutions are usually linked with the physically observed
phenomena. In the case of linear equations an asymptotic expansion of general
solution can be constructed by using a fundamental system of asymptotic
solutions \cite{{Fedoruk_AsODU},{Olver}}. We present here a similar result
for nonlinear equations.

Main object  is the system of differential equations
\begin{equation}\begin{array}{lc}\displaystyle{\frac{d\bx}{dt}=t^k\f(\bx,t),\ \
\bx\in\R^n,\ (k=\const\geq 0)}\end{array}\label{Main_Eq}
\end{equation} considered at $t\geq t_0>0$. The factor $t^k$ is here involved
for identification of growing right sides, occurring in applied problems;
$k+1$ is called Poincare rank. Let $\bX(t;\alpha),\ \alpha\in\R^n$ be an
$n$--parametric asymptotic solution at infinity. It means that the vector
function $\bX(t)\in C^1[t_0,\infty)$ under substitution into the equation
\eqref{Main_Eq} yields the residual
\begin{equation}\begin{array}{lc}\displaystyle{\bY(t)\equiv\frac{d\bX}{dt}-
t^k\f(\bX,t),}\end{array}\label{FAR_Main_Eq}
\end{equation} which is vanishing at infinity:
$\bY(t)=O(t^{-\mu}),\ t\to\infty,\ \mu=\const>0$. We study the following
problem: is there $n$--parametric exact solution $\bx(t;\alpha)$, for which
the $\bX(t;\alpha)$ is an asymptotic approximation at infinity:
$\bx(t;\alpha)=\bX(t;\alpha)+O(t^{-\nu}),\ t\to\infty,\ (\nu=\const>0)$? An
affirmative answer is obtained here. The proof is reduced to the  existence
theorem for the remainder. The principal feature is the uniformity of the
asymptotics with respect to parameter $\alpha$.

Justification of asymptotics in general case was not proved up to now and,
probably, can not be proved at all, although very interesting results are
known for some types of equations. Results for the linear equations are
collected in \cite{{Fedoruk_AsODU},{Olver}}. Justification of asymptotics for
the nonlinear Painlev\'e equations are discussed in
\cite{{Butru},{Nov},{Nalini}}. Usually existence theorem follows the formal
asymptotic construction. Various theorems were proved under different
conditions which depend on the used asymptotic methods
\cite{{Bruno},{Kudr},{Kozl},{Kuzn1},{Kuzn2}}. The well known Kuznetsov's
results \cite{{Kuzn1},{Kuzn2}} deal with the case when $X(t)$ is a formal
series (of powers, logarithms and exponentials) with constant coefficients.
Sometimes such series give an asymptotic expansion of general solution
\cite{Kozl}. But often such expansion does not correspond to general
solution. For example, the pendulum equation $\ddot x+\sin x=0$ has few
solutions of that type: a single solution $x(t)\equiv 0$ and two
one-parametic solutions $x(t)=\pm\pi+O(\exp(-t)),\ t\to\infty$. The other
solutions are oscillating and belong to two-parametric family.

Usually the general asymptotic expansions for nonlinear equations are given
by WKB formulas in the form of power series with oscillating coefficients.
P.Boutroux studied such two-parametric asymptotics for the Painlev\'e
equations \cite{Butru}. In general case the WKB asymptotics depend on
$n$--parameters and just such solutions are of interest in physics. For
example, existence of two-parametric increasing solution of main resonance
equations is identified with existence of autoresonance phenomenon
\cite{{FajansFr},{LK_Stekl}}. Justification of WKB asymptotics for the linear
equations \cite{{Fedoruk_AsODU},{Wasov}} and for some nonlinear equations
\cite{{Kozl}} can be reduced to the Kuznetsov's theorem. But the reduction is
unknown and evidently is impossible for many equations occurred in physics.
Painleve\'s equations studied in \cite{{Butru},{Nov},{Nalini}} give examples
of such type.

Justification of WKB asymptotics for the Painlev\'e equations is usually
based on the property of integrability \cite{Nov}. Nonlinear nonintegrable
equations are studied in this paper. We prove an existence theorem and give
an estimate for the remainder. The presence of the $n$--parametric solution
makes very easy the proof. Both the technic of the proof and the derived
estimate are determined by the properties of the asymptotic solution
$\bX(t;\alpha)$. Similar estimate obtained in \cite{Ging_Tovbis} for the case
of single solution $\bX(t)$ is linked with the properties of matrix of
linearized system $\partial_\bx\f(\bX,t)$. Both of these results are obtained
under different {\it sufficient} conditions, hence in general case they can
not be reduced to each other. In order to perform a comparison one have to
specify the considered system and the asymptotic solution. We avoid both any
asymptotic construction and any discussion on the method which yields the
asymptotic solution. This approach allows us to obtain an existence theorem
in general form which may be useful for wide range of problems.

Our estimate is uniform with respect to parameter $\alpha$ and this feature
is crucial for the successful application in asymptotic theories. In
particular the uniformity of the WKB asymptotics proves the orbital stability
of the oscillating solutions. Researches \cite{{Kuzn1},{Kuzn2},{Ging_Tovbis}}
deal with a single solution which may be stable or unstable, see \cite{Kozl}.
As a rule the solution have no any physical sense, if it is unstable.
Stability of the solution is not discussed in the existence theorems
\cite{{Kuzn1},{Kuzn2},{Ging_Tovbis}}.  The problem of stability return us to
investigation of $n$--parametric solution which is considered here. Certainly
our result does not give any information on unstable solutions.

\section{ Input conditions}\label{Condition}

We consider the equation \eqref{Main_Eq} under assumption that the  vector
function from the right side $\f(\bx,t)$ is smooth (continuously
differentiable) in the domain $\{\bx\in D\subseteq\R^n, t\geq t_0\}$. An
asymptotics at infinity
\begin{equation*}\displaystyle{\f(\bx,t),\frac{\partial\f(\bx,t)}{\partial\bx}
=O(1), \quad t\to\infty}\end{equation*} uniformly on every compact
$\bx\in\mathcal{K}\subset D$ is supposed. The principal conditions are
formulated by means of the asymptotic solution $\bX(t;\alpha)$. It is assumed
that the vector function $\bX(t;\alpha)$ depends on the parameters
$\alpha=(\alpha_1,...,\alpha_n)$ and this dependence is smooth in some domain
$\alpha\in{A}_0\subseteq\R^n$. Values of the asymptotic solution belong to
the compact\footnote{The case of unbounded asymptotic solution can be reduced
to considered one.}: $\bX(t;\alpha)\in \mathcal{K}_\mathcal{A}\subset \R^n$
when both $t\in[t_0,\infty)$ and parameters $\alpha$ range over any compact:
$\alpha\in\mathcal{A}\subset A_0$. Parameters $\alpha$ are supposed
functionally independent and the Jacobian
\begin{equation*}\mathcal{J}(t;\alpha)
\equiv\displaystyle{\frac{\partial\bX}{\partial\alpha}}\end{equation*} has an
asymptotics
\begin{equation*}\displaystyle{\det\mathcal{J}(t;\alpha)=
t^p[a+o(1)],\ \ t\to\infty,\ (p=\const,\ a=\const\neq 0)}.\label{Yakobi}
\end{equation*} In some sense $\bX(t;\alpha)$ is a general
asymptotic solution.  Elements of the  Jacobian matrix $J=\{a_{i,j}\}$ have a
similar asymptotics (may be different order for various number $i,j=1,...,n$)
\begin{equation*}a_{i,j}(t;\alpha)=O(t^{q_{i,j}}),\ \ t\to\infty,\
(q_{i,j}=\const).\end{equation*} We shall use the more short formula which
may be considered as an asymptotic estimate:
\begin{equation}\mathcal{J}(t;\alpha)
= O(t^{q}),\ \ t\to\infty,\ (q=\max q_{i,j}).\label{M_Yakobi}
\end{equation}
Under these assumptions the inverse matrix has a like asymptotic estimate
\begin{equation}\mathcal{J}^{-1}(t;\alpha)=O(t^{r}),\ \ t\to\infty.
\label{Inv_M_Yakobi}
\end{equation}
One can see that the exponents satisfy the relations: $p\leq qn,\ r\leq
q(n-1)-p$.

The residual determined by the formula \eqref{FAR_Main_Eq} depends on the
parameters as well: $\bY=\bY(t;\alpha)$. We assume asymptotic estimates:
\begin{equation}\displaystyle{\bY(t;\alpha)=O(t^{-\mu}),\
\frac{\partial\bY}{\partial\alpha}}= O(t^{-\mu+s}),\ \ t\to\infty,\
(\mu,s=\const).\label{Nevyazka}
\end{equation}
All asymptotics given above are supposed to be uniform with respect to
parameters $\alpha\in\mathcal{A}$.

{\bf Remark.}  All restrictions are induced by examples of WKB asymptotics
for the Painleve's equations \cite{{Butru},{Nov},{Nalini}} and for the main
resonance equations \cite{{LK_Stekl}}. The exponent $\mu$ is a crucial
parameter. It corresponds to the length of the asymptotic solution. As a
rule, $\mu$ can be made large as desired\footnote{In the case of complicated
problem the quantity $\mu$ depends on the talent of researcher,
\cite{Babich}.}. While the exponents $k,p,q,r,s$ depend on both the
considered equations and the family of asymptotic solutions. They do not
depend on the length of asymptotics $\mu$, as is seen in different examples.

\section{Existence theorem}\label{Exist}

\begin{theorem}
Let the vector function $\bX(t;\alpha),\ \alpha\in A_0\subseteq\R^n$ be an
$n$--parametric asymptotic solution of the equation \eqref{Main_Eq} and the
properties \eqref{M_Yakobi}, \eqref{Inv_M_Yakobi} hold.  If  the residual
defined by \eqref{FAR_Main_Eq} is decreased \eqref{Nevyazka} fast
sufficiently at infinity:
\begin{equation} \mu>r+s+1 \ {\rm and} \ \mu>2(r+q+1)+k,\label{M_Cond}
\end{equation} then for arbitrary compact
$\mathcal{A}\subset A_0$ there is a neighborhood of infinity $t\in[T,\infty)$
where the equation \eqref{Main_Eq} has the $n$-parametric exact solution
$\bx(t;\alpha)$ which has an asymptotics
\begin{equation*}\bx(t;\alpha)=\bX(t;\alpha)+O(t^{-\nu}),\ \
t\to\infty,\ (\nu=\mu-r-q-1>0)\end{equation*} uniformly with
$\forall\,\alpha\in\mathcal{A}$.
\end{theorem}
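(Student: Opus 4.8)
The plan is to look for the exact solution in the form $\bx(t;\alpha) = \bX(t;\alpha) + \bZ(t;\alpha)$, where $\bZ$ is a new unknown remainder that must be shown to exist and to be small. Substituting this ansatz into \eqref{Main_Eq} and using the definition \eqref{FAR_Main_Eq} of the residual, I obtain an evolution equation for $\bZ$ of the form $d\bZ/dt = t^k[\f(\bX+\bZ,t)-\f(\bX,t)] - \bY(t;\alpha)$. Expanding the difference of the nonlinearity by Taylor's formula gives a linear principal part governed by the matrix $t^k\partial_\bx\f(\bX,t)$ plus a quadratic-in-$\bZ$ remainder plus the forcing term $-\bY$. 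The whole problem thus reduces to proving the existence of a solution $\bZ(t;\alpha)$ that decays like $O(t^{-\nu})$ uniformly in $\alpha\in\mathcal{A}$ — exactly the "existence theorem for the remainder" promised in the introduction.

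The key device I would exploit is the Jacobian $\mathcal{J}(t;\alpha)=\partial_\alpha\bX$, whose columns are by construction solutions of the linearized homogeneous equation $d\mathbf{v}/dt = t^k\partial_\bx\f(\bX,t)\,\mathbf{v}$ up to a residual coming from differentiating $\bY$. Indeed, differentiating the identity $d\bX/dt = t^k\f(\bX,t)+\bY$ with respect to $\alpha$ shows that $\mathcal{J}$ is a fundamental matrix of the linearized system modulo the term $\partial_\alpha\bY = O(t^{-\mu+s})$. Since $\mathcal{J}=O(t^q)$ and $\mathcal{J}^{-1}=O(t^r)$ by \eqref{M_Yakobi}–\eqref{Inv_M_Yakobi}, I can build an approximate Green's function / variation-of-parameters operator from $\mathcal{J}$ and recast the differential problem as an integral equation
\begin{equation*}
\bZ(t;\alpha)=-\mathcal{J}(t;\alpha)\int_t^\infty \mathcal{J}^{-1}(\tau;\alpha)\,\big[\bY(\tau;\alpha)+\bG(\tau,\bZ)\big]\,d\tau,
\end{equation*}
where $\bG$ collects the nonlinear (quadratic) remainder of the Taylor expansion and the lower-order mismatch between $\mathcal{J}$ and a true fundamental matrix. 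The upper limit $\infty$ is chosen so that $\bZ\to0$ at infinity, which is exactly the decaying solution we want to single out.

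I would then set up a contraction-mapping argument in the Banach space of continuous vector functions on $[T,\infty)$ with the weighted norm $\|\bZ\|=\sup_{t\ge T,\ \alpha\in\mathcal{A}} t^{\nu}\,|\bZ(t;\alpha)|$, for a suitably large $T$. Estimating the linear forcing term $\mathcal{J}\int \mathcal{J}^{-1}\bY\,d\tau$ using the bounds $\mathcal{J}=O(t^q)$, $\mathcal{J}^{-1}=O(t^r)$, $\bY=O(t^{-\mu})$ and performing the $\tau$-integration produces a bound of order $t^{-(\mu-r-q-1)}=t^{-\nu}$; this is where the first hypothesis $\mu>r+s+1$ and the value $\nu=\mu-r-q-1$ come from, ensuring the integral converges and the leading remainder sits at the claimed order. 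The nonlinear term $\bG$, being quadratic in $\bZ$ and carrying the factor $t^k$, contributes a bound of order $t^{k+q+r}\cdot(t^{-\nu})^2$ times the integration gain, and demanding that this be both summable and a genuine contraction is precisely what forces the stronger condition $\mu>2(r+q+1)+k$. The uniformity in $\alpha$ is automatic because every estimate in \eqref{M_Yakobi}, \eqref{Inv_M_Yakobi}, \eqref{Nevyazka} is assumed uniform on $\mathcal{A}$, so all constants can be taken independent of $\alpha$.

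The main obstacle I anticipate is \emph{not} the contraction itself but the honest construction and control of the variation-of-parameters operator: the Jacobian $\mathcal{J}$ is only an \emph{approximate} fundamental matrix (its defect is $\partial_\alpha\bY=O(t^{-\mu+s})$, not zero), so I must verify that carrying this defect through the integral operator still lands in the weighted space — this is the role of the condition $\mu>r+s+1$ — and that the polynomial growth of $\mathcal{J}$ and $\mathcal{J}^{-1}$ does not destroy integrability after the $t^k$ factor from the equation is included. Differentiability of the fixed point with respect to $\alpha$ (needed to assert a genuine $n$-parametric \emph{family} $\bx(t;\alpha)$ rather than a single curve for each $\alpha$) is a secondary technical point, handled by differentiating the integral equation and running a second contraction for $\partial_\alpha\bZ$ under the same smallness of $T$.
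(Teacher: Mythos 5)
Your overall strategy coincides with the paper's (it is Fedoruk's device): differentiate the residual identity in $\alpha$ to see that $\mathcal{J}=\partial_\alpha\bX$ is a fundamental matrix of the linearization up to the defect $\partial_\alpha\bY=O(t^{s-\mu})$, convert to an integral equation integrated from $\infty$, and apply a contraction argument in a weighted space, with the two inequalities of \eqref{M_Cond} controlling the linear and quadratic terms respectively. However, there is a genuine gap in your execution, and it sits exactly at the step you assign to the first hypothesis. You contract on the remainder itself, $\bZ=\bx-\bX$, in the norm $\sup_t t^\nu|\bZ(t)|$, so your operator carries the split kernel $\mathcal{J}(t)\mathcal{J}^{-1}(\tau)$ with the two factors evaluated at \emph{different} times. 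Under the stated hypotheses only the separate bounds $\mathcal{J}=O(t^q)$, $\mathcal{J}^{-1}=O(t^r)$ are available (no cancellation across times), so the term generated by the defect of $\mathcal{J}$ is estimated by
\begin{equation*}
t^{\nu}\Big|\mathcal{J}(t)\int_t^\infty \mathcal{J}^{-1}(\tau)\,\partial_\alpha\bY(\tau)\,
\mathcal{J}^{-1}(\tau)\,\bZ(\tau)\,d\tau\Big|\;\leq\;
\const\cdot t^{\,q+2r+s+1-\mu}\,\sup_{\tau\geq T}\tau^{\nu}|\bZ(\tau)|,
\end{equation*}
because the defect is multiplied by \emph{two} factors $\mathcal{J}^{-1}$ (at time $\tau$) and one factor $\mathcal{J}$ (at time $t$). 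Contraction of this term therefore requires $\mu>q+2r+s+1$, which exceeds the hypothesis $\mu>r+s+1$ by $q+r\geq 0$ and is \emph{not} implied by \eqref{M_Cond}: take $k=0$, $q=1$, $r=0$, $s=3$, $\mu=4.5$; both conditions of \eqref{M_Cond} hold ($4.5>4$ twice), yet your bound is $O(t^{1/2})$, so the linear part of your operator is not even bounded on your space, let alone contractive. (Your accounting for the quadratic term is correct: there the same computation reproduces $\mu>2(r+q+1)+k$ in either formulation, and your forcing term does land at $O(t^{-\nu})$.)

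The paper avoids this loss by changing the unknown rather than keeping $\bZ$: it substitutes $\bR=\mathcal{J}\bC$ and contracts on $\bC$ in the norm $\|\bC\|_\lambda=\sup_t t^{\lambda}|\bC(t)|$ with $\lambda=\mu-r-1$. In the $\bC$-equation the linear part is the matrix $\mathcal{M}=-\mathcal{J}^{-1}\partial_\alpha\bY=O(t^{r+s-\mu})$, in which $\mathcal{J}^{-1}$ and the defect are evaluated at the \emph{same} time and only one factor $t^{r}$ occurs; integrating against $t^{-\lambda}$ then gives a Lipschitz constant $O(T^{r+s+1-\mu})$, small precisely under $\mu>r+s+1$, and the remainder is recovered at the end as $\bR=\mathcal{J}\bC=O(t^{q-\lambda})=O(t^{-\nu})$. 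So your argument can be repaired, but only by replacing your weighted norm on $\bZ$ with the corresponding norm on $\mathcal{J}^{-1}\bZ$ — which is exactly the paper's substitution, not a cosmetic variant of your setup.
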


\begin{proof}
The idea of the proof was suggested by M.Fedoruk \cite{Fed-WKB} and later was
applied in \cite{LK_Stekl}.

We seek an exact solution as the sum
$\bx(t;\alpha)=\bX(t;\alpha)+\bR(t;\alpha).$ The dependence on the parameters
$\alpha$ will be omitted for short. The equation for the desired remainder
$\bR$ reads
\begin{equation}\displaystyle{\frac{d\bR}{dt}=
t^k[\f(\bX+\bR,t)-\f(\bX,t)] -\bY(t).}\label{Res_R_Eq}
\end{equation} It can be rearranged by
identification of the linear part of the operator as follows
\begin{equation}\displaystyle{\frac{d\bR}{dt}=
t^k[\mathcal{M}_0(t)\bR+\bG(\bR,t)] -\bY(t).}\label{Res_Eq}
\end{equation}
Here the matrix $\mathcal{M}_0(t)$ is calculated by means of derivatives of
the known functions given above: \begin{equation*}\mathcal{M}_0=\displaystyle
{\frac{\partial\f(\bX,t)}{\partial\bx}}.\end{equation*} Vector function
\begin{equation*}\bG(\bR,t)=\f(\bX+\bR,t)-\f(\bX,t)-\partial_{\bx}\f(\bX,t)\bR
\end{equation*} has an asymptotics
$\bG(\bR,t)=O(|\bR|^2)$ as $\bR\to 0$. This asymptotics is uniform with
respect to $t\geq t_0$.

We desire to demonstrate existence of an exact solution $\bR(t;\alpha)$ which
is given in some infinite interval $T\leq t<\infty$ and which is rapidly
decreasing at infinity. As usually in such theorems, the differential
equation \eqref{Res_Eq} must be reduced to an integral equation in order to
apply the  method of successive approximations. However, a trivial reduction
consisting in integration with respect to $t$ does not lead to success
because the linear part of the integral operator is not contractive in that
case. The idea suggested by M.Fedoruk consists in inversion of an essential
part of the linear operator in the equation \eqref{Res_Eq} by using the given
asymptotic solution $\bX(t;\alpha)$. For this type of inversion  we use the
Jacobi matrix $\mathcal{J}(t;\alpha).$ It is a fundamental asymptotic
solution for the linearized equation. Indeed, derivation of the starting
identity \eqref{FAR_Main_Eq} with respect to parameters $\alpha$ yields
\begin{equation*}\displaystyle{\frac{d\mathcal{J}}{dt}-t^k\mathcal{M}_0(t)
\mathcal{J} = \partial_\alpha\bY.}\end{equation*} Here the left side
represents linearized operator which is applied at $J$, while the right side
is the residual which is rapidly vanishing at infinity:
$\partial_\alpha\bY(t;\alpha)=O(t^{s-\mu}),\ t\to\infty.$

The Jacobi matrix is used in the substitution $\bR=\mathcal{J}\bC$. Then the
equation \eqref{Res_Eq} is transformed at the equation for the vector
function $\bC(t;\alpha)$:
\begin{equation}\displaystyle{\frac{d\bC}{dt}=
\mathcal{M}(t)\bC+\mathcal{G}(\bC,t) -\mathcal{J}^{-1}\bY(t).}
\label{Res_C_Eq}
\end{equation} Main novelty now is the matrix $\mathcal{M}$ at the linear
part
\begin{equation*}\mathcal{M}(t)=-\mathcal{J}^{-1}\partial_\alpha\bY=
O(t^{r+s-\mu}),\end{equation*} which is rapidly decreasing at infinity if the
$\mu$ is sufficiently large. In particular, the norm of the matrix is
estimated in $\R^n$
\begin{equation*}|\mathcal{M}(t)|\leq M_0t^{r+s-\mu},\ \forall t\geq t_0,\
(M_0=\const).\end{equation*} Nonlinear part is represented by the vector
function
\begin{equation*}\mathcal{G}(\bC,t)= t^{k}\mathcal{J}^{-1}\bG(\mathcal{J}\bC,t).
\end{equation*} It has an asymptotics $\bG(\bR,t)=O(|\bR|^2)$ as $|\bR|\to 0$
and this asymptotics is differentiable with $\bR$ (uniformly with respect to
$t$). Then the inequality
\begin{equation*}|\bG(\bR_1,t)-\bG(\bR_2,t)|\leq M_K
(|\bR_1|+|\bR_2|)|\bR_1-\bR_2|, \ \ \forall\,t\geq t_0\end{equation*} take
place for all $\bR_1,\bR_2$ in arbitrary compact $\mathcal{K}$; here the
constant $M_K$ depends on the chosen compact $\mathcal{K}$. Last relation
provides the Lipschitzian property of the operator:
\begin{equation}\begin{array}{lc}|\mathcal{G}(\bC_1,t)-\mathcal{G}(\bC_2,t)|\leq
M_Kt^{k+r}\cdot t^q(|\bC_1|+|\bC_2|)\cdot t^q|\bC_1-\bC_2|\end{array}
\label{Lip}
\end{equation} for all vector function
$t^q\bC_1(t),t^q\bC_2(t)$ in every compact $\mathcal{K}$.

Differential equation \eqref{Res_C_Eq} is equivalent to the integral equation
\begin{equation}\bC(t)-\int_\infty^t[\mathcal{M}(\eta)\bC(\eta)
+\mathcal{G}(\bC(\eta),\eta)]\,d\eta=\bZ(t).\label{Res_Int_Eq}
\end{equation}
The vector function from the right side
\begin{equation*}\bZ(t)=\int_t^\infty\mathcal{J}^{-1}\bY(\eta)\,d\eta\end{equation*}
is well known and has an asymptotics $\bZ(t)=O(t^{r+1-\mu}),\ t\to\infty.$ We
are looking for a solution $\bC(t)$ which has a like asymptotic behavior.

In order to prove the theorem, we have to introduce some Banach space. An
appropriate space of continuous vector functions $\bC(t)\in
C_\lambda[T,\infty)$ is determined by the weight norm
\begin{equation*}\|\bC\|_\lambda=\sup_{t\geq T}t^{\lambda}|\bC(t)|,\ \lambda=
\mu-r-1.\end{equation*} The boundary $T=\const\geq t_0$ will be chosen from
requirement of contractiveness of the integral operator in what follows.

The first condition from \eqref{M_Cond} taken in the form $\lambda> q$ and
the inequality \eqref{Lip} provide the estimate
\begin{equation}\begin{array}{lc}|\mathcal{G}(\bC_1,t)-\mathcal{G}(\bC_2,t)|
\leq
M_Kt^{k+r+2q-2\lambda}\cdot(\|\bC_1\|_\lambda+\|\bC_2\|_\lambda)\|\bC_1-\bC_2\|_\lambda\end{array}
\label{Lip-2}
\end{equation} which is valid for all vector functions
$\bC_1(t),\bC_2(t)$ in arbitrary compact $\mathcal{K}\subset
C_\lambda[T,\infty)$. If we take here integral with $t$, then we can derive
the Lipschitzian inequality
\begin{equation}\begin{array}{lc}\displaystyle\Big\|\int_\infty^t
[\mathcal{G}(\bC_1,\eta)-\mathcal{G}(\bC_2,\eta)]\,d\eta\Big\| _\lambda\leq
M_K2KT^{k+r+2q+1-\lambda}\cdot\|\bC_1-\bC_2\|_\lambda \equiv
L_K\cdot\|\bC_1-\bC_2\|_\lambda,\end{array} \label{Lip-3}\end{equation} which
is valid for all $ \|\bC_1\|_\lambda,\|\bC_2\|_\lambda\leq K$. The
Lipschitzian constant
\begin{equation*}L_K=M_K2KT^{k+2(r+q+1)-\mu}\end{equation*} can be chosen
arbitrary small (for example, $L_K<1/2$) under large $T$, because the
exponent $k+2(r+q+1)-\mu<0$ is negative because of the second condition
\eqref{M_Cond}. The relation \eqref{Lip-3} provides contractiveness of the
nonlinear part of the integral operator.

The norm of the linear part of the integral operator can be estimated by
integration of the matrix norm $\mathcal{M}(t)$:
\begin{gather*}\sup_{t\geq T}t^{\lambda}\int_t^\infty\eta^{-\lambda}
|\mathcal{M}(\eta)|\,d\eta \leq M_1\sup_{t\geq T}t^{\lambda}\int_t^\infty
\eta^{r+s-\mu-\lambda}\,dt=\\ \\
=M_1T^{r+s+1-\mu}/(r+s-\mu-\lambda+1)\equiv L_0.\end{gather*} The constant
$L_0$ can be made small as desired (say, $L_0<1/2$) under assumed condition
$\mu>r+s+1$. It is enough to choose an appropriate large $T$. The $L_0$ is
used as a Lipschitzian constant for the linear part of integral operator:
\begin{equation*}\Big\|\int_\infty^t\mathcal{M}(\eta)[\bC_1(\eta)
-\bC_1(\eta)]\,d\eta\Big\|_\lambda \leq
L_0\|\bC_1-\bC_2\|_\lambda.\end{equation*}

Thus, if the value of $T$ is sufficiently large then the integral operator
from the equation \eqref{Res_Int_Eq} turn out to be contractive in Banach
space $C_\lambda[T,\infty)$. The right side of the equation
$\bZ(t)=O(t^{r+1-\mu}),\ t\to\infty$ belongs the same space $\bZ(t)\in
C_\lambda[T,\infty)$ as $\lambda=\mu-r-1$. Then the fixed-point theorem can
be applied to the integral equation \eqref{Res_Int_Eq}. It implies existence
of the solution $\bC(t)\in C_\lambda[T,\infty)$. This vector function is
differentiable because of the identity \eqref{Res_C_Eq}. As the equation
\eqref{Res_C_Eq} is equivalent to \eqref{Res_R_Eq}, then
$\bR(t)=\mathcal{J}\bC(t)$ is a desired solution of the residual equation
\eqref{Res_R_Eq} and it has an asymptotics
\begin{equation*}\bR(t)=\mathcal{J}\bC(t)=O(t^{q-\lambda}),\ t\to\infty,
\ (\lambda-q=\nu).
\end{equation*}

It must be point out that all found estimates are uniform with respect to
parameters  $\alpha$ in arbitrary compact $\mathcal{A}$. So the proof, as
given above, is suitable for the both case either fixed parameter $\alpha$ or
fixed compact: $\alpha\in\mathcal{A}\subset A_0$. In the last case the
estimate of the remainder
\begin{equation*}\bR(t;\alpha)=\mathcal{J}\bC(t;\alpha)=O(t^{-\nu}),
\ t\to\infty\end{equation*} is uniform with respect to
$\forall\,\alpha\in\mathcal{A}$. This completes the proof.

\end{proof}

{\bf Remark.} It may be occurred in application at specific problems that the
order of the asymptotic estimate of the remainder $\bR(t)=O(t^{\mu-r-q-1})$
is very rough. It is lack of the theorem given for general case. More precise
estimations can be derived, if a full asymptotic solution in the form of
infinite asymptotic series is known; see \cite{LK_Stekl}.

\end{document}